%!TEX encoding = UTF-8 Unicode
%!TEX TS-program = pdflatex

\documentclass[11pt]{amsart}

\usepackage[T1]{fontenc}
\usepackage[utf8]{inputenc}

\usepackage{enumitem}

\usepackage[matrix,arrow,curve]{xy}
\usepackage{geometry}
\geometry{verbose,tmargin=2cm,bmargin=2cm,lmargin=2.4cm,rmargin=2.4cm}
\usepackage{graphicx}

\usepackage[english]{babel}

% Police Times pour texte et math
\usepackage{amssymb,mathptmx
}

\usepackage[svgnames]{xcolor}

%\usepackage[%pdftex=true,
%hyperindex=true,
%%colorlinks=true,
%%linkcolor=cyan
%]{hyperref}

\input{macros-perso-fm-en.sty}

\newtheorem{thm}{Theorem}%[section]
\newtheorem*{thm*}{Theorem}%[section]
\newtheorem*{thmmain}{Main Theorem}%[section]

\newtheorem*{prop*}{Proposition}

\newtheorem*{lem*}{Lemma}

\newtheorem*{cormain}%[thm]
{Main Corollary}

\newtheorem*{dfn*}{Definition}

\newtheorem*{rem*}{Remark}

\newtheorem{ex}%[thm]
{Example}

\begin{document}

\title[Birational involutions of the real projective plane]{Birational involutions of the real projective plane fixing an irrational curve}
\author[Frédéric Mangolte]{Frédéric Mangolte\\ {\tiny joint work with I. Cheltsov, E. Yasinsky, S. Zimmermann}\\
{\tiny \today}
}

\thanks{
This review is an elaboration of a presentation given at the 
\textit{Real algebraic geometry and singularities
conference in honor of Wojciech Kucharz's 70th birthday}
 in Krakow in 2022.}

\maketitle

\section{Examples of birational involutions of the projective plane $\PP^2$}
Let $\Bbbk=\RR,\CC$.

The first way to describe an element of $\Bir_\Bbbk(\PP^2)$ is to explicitly express it  in homogeneous coordinates.
\begin{ex}[Standard Cremona involution]
The birational map from $\PP^2$ to $\PP^2$ given by
$$
\alpha_0\colon [x:y:z]\dashrightarrow [yz: xz: xy]
$$
is called the \emph{Standard Cremona involution}.
It is well-defined except at the three points $[1:0:0],[0:1:0],[0:0:1]$ \emph{(base points)} and is conjugate in $\Bir_\Bbbk(\PP^2)$ to the linear involution 
$$
\tau_0\colon  [x:y:z]\dashrightarrow [x: y: -z]\;.
$$

\end{ex}

Another way to define a birational involution of $\PP^2$ is to start with a biregular involution on a smooth rational surface which is called a \emph{model}, and then pull it back to $\PP^2$ by a birational map. 
\begin{ex}[Geiser involution]
\label{ex.dp2}
Let $S_2$ be a complex del Pezzo surface of degree $2$ with anti-canonical double cover 
$$
 \xymatrix{\pi\colon
S_2\ar@{->}[r]_{}^{2:1}&\PP^2
}
$$
 with branch locus a smooth quartic plane curve $C$. The surface $S_2$ is defined by an equation of the form
 $$
 w^2=f_4(x,y,,z)
 $$
 in $\PP(2,1,1,1)$ where $f_4$ is the homogeneous polynomial of degree $4$ whose zero locus in $\PP^2$ is the curve $C$.

The biregular involution $\tau\in\Aut_\CC(S_2)$ given by
$$
\tau\colon [w:x:y:z]\dashrightarrow [-w:x:y:z]
$$
exchanges the two sheets of the double cover. The surface $S_2$ is rational over $\CC$ which means there is a birational map
$$
\varphi\colon S_2 \dasharrow \PP^2\;.
$$

Hence the composed map $\alpha= \varphi\tau\varphi^{-1}$ is an order $2$ element of the group $\Bir_\CC(\PP^2)$ classically called the \emph{Geiser} involution.

Assume now that $S_2$ is defined over $\RR$ (\textit{i.e.} $f_4$ has real coefficients). If the real locus $S_2(\RR)$ is non empty and connected (for the euclidean topology), then $S_2$ is rational over $\RR$ by Comessatti's Theorem (see e.g. \cite{ma-book-en}),  and we can assume that $\varphi$ is defined over $\RR$.

The biregular automorphism $\tau\in\Aut_\RR(S_2)$ leads to a birational involution $\alpha\in\Bir_\RR(\PP^2)$.
\end{ex}

Observe that $\alpha$ is only defined up to conjugation in $\Bir_\Bbbk(\PP^2)$ as the map $\varphi$ is not uniquely defined. when we say "the" Geiser involution, we speak in fact of the conjugacy class of $\alpha$ in $\Bir_\Bbbk(\PP^2)$.

\section{Reduction to $G$-birational classification of minimal del Pezzo surfaces and conic bundles}
\label{section.mmp}
In fact we can reverse and generalize the process used in the second example by \emph{regularizing} any birational involution of $\PP^2$.

From now on, we let $G=\ZZ/2$.

\begin{prop*}
 Let $\Bbbk=\RR,\CC$ and $\alpha\in\Bir_\Bbbk(\PP^2)$ be an element of order $2$. There exists a smooth rational surface $S$ and a birational map defined over $\Bbbk$
 $$
\varphi\colon S \dasharrow \PP^2\;.
$$
 such that 
 $\tau:=\varphi^{-1}\alpha\varphi\in\Aut_\Bbbk(S)$ is a biregular involution of $S$.
\end{prop*}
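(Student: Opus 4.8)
The plan is to forget the explicit geometry of $\alpha$ and work instead with its induced action on the function field, then descend to a well-chosen projective model. Set $K=\Bbbk(\PP^2)=\Bbbk(x,y)$. The involution $\alpha$ induces a $\Bbbk$-automorphism $\alpha^*$ of $K$ of order two, so $G=\langle\alpha\rangle\cong\ZZ/2$ acts faithfully on $K$. By Artin's theorem the extension $K/K^G$ is finite Galois with group $G$; in particular $\trdeg_\Bbbk K^G=\trdeg_\Bbbk K=2$, so $K^G$ is the function field of a surface.

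First I would build a normal model carrying a \emph{biregular} action. Choose any normal projective surface $Y$ over $\Bbbk$ with $\Bbbk(Y)=K^G$, and let $X$ be the normalization of $Y$ in the finite field extension $K$. Then $X\to Y$ is a finite morphism of normal projective surfaces, $\Bbbk(X)=K$, and because $G$ acts on $K$ by $K^G$-linear automorphisms it acts on the $\cO_Y$-algebra defining $X$; hence $G\subset\Aut_\Bbbk(X)$ biregularly, with $X/G=Y$. Thus $X$ is a normal projective surface, $\Bbbk$-birational to $\PP^2$, on which $\alpha$ has become a genuine automorphism.

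It remains to smooth $X$ without destroying regularity of the action, and this equivariant smoothing is the step I expect to be the main obstacle: one must ensure that resolving the singularities of $X$ does not spoil biregularity of $\tau$. The clean way out is that a normal surface over a field of characteristic $0$ admits a \emph{minimal} resolution $S\to X$, characterised by the absence of $(-1)$-curves among its exceptional curves; minimality makes $S$ unique up to unique isomorphism, hence functorial for automorphisms of $X$, so the biregular $G$-action lifts canonically to a biregular $G$-action on the smooth projective surface $S$. (An equally valid alternative is functorial equivariant resolution of singularities in characteristic $0$.) Since $S$ is $\Bbbk$-birational to $X$, hence to $\PP^2$, it is a smooth rational surface, and the identification of function fields gives a birational map $\varphi\colon S\dashrightarrow\PP^2$ over $\Bbbk$ under which the generator $\tau\in\Aut_\Bbbk(S)$ of $G$ satisfies $\tau=\varphi^{-1}\alpha\varphi$ in $\Bir_\Bbbk(\PP^2)$, as required.
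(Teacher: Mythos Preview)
Your argument is correct. The paper itself states this proposition without proof: it is a survey, and this regularization statement is classical, so there is no ``paper's proof'' to compare against beyond the implicit appeal to the well-known fact that any finite subgroup of $\Bir_\Bbbk(S_0)$ can be realised biregularly on some smooth projective model.

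Your route via the function field is one of the two standard ways to prove this. The other, more geometric, approach is to iteratively blow up the (finitely many) closed points where $\alpha$ or $\alpha^{-1}$ fails to be defined, observe that the resulting indeterminacy locus strictly decreases (or argue with a $G$-equivariant resolution of the graph of $\alpha$ inside $\PP^2\times\PP^2$), and thus reach a smooth model on which $\alpha$ is everywhere defined; since $\alpha^2=\id$, definedness of $\alpha$ forces definedness of $\alpha^{-1}$ as well. Your approach trades this explicit blow-up bookkeeping for the uniqueness of the minimal resolution, which is cleaner and generalises immediately to any finite $G\subset\Bir_\Bbbk(\PP^2)$. Both methods yield the same conclusion and are equally acceptable here; the only thing worth making explicit in your write-up is that the minimal resolution exists and is unique over an arbitrary perfect field (Lipman), so that the argument goes through for $\Bbbk=\RR$ and not just $\CC$.
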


\begin{dfn*}
Let $S$ be a smooth
surface  
endowed with a nontrivial biregular involution $\tau$. 
The subgroup $H=\langle\tau\rangle\subset\Aut_\Bbbk(S)$ is then isomorphic to $G$ and the pair $(S,H)$ is called a \emph{$G$-surface}.
\end{dfn*}

\begin{prop*}
 Let $\alpha,\alpha'\in\Bir_\Bbbk(\PP^2)$ be elements of order $2$, $(S,\langle\tau\rangle)$ and $(S',\langle\tau'\rangle)$ associated rational $G$-surfaces. Then $\alpha,\alpha'$ are conjugated in $\Bir_\Bbbk(\PP^2)$ if and only if the associated $G$-surfaces  are equivariantly birational. That is there exists a birational map $\varphi \colon S \dasharrow S'$ such that $\varphi\tau\varphi^{-1}=\tau'$ over a Zariski dense open subset of $S$.
\end{prop*}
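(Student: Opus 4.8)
The plan is to reduce the whole statement to a formal transport of conjugacy, using the regularizing maps provided by the first proposition of this section. The starting point is that a birational map $\varphi\colon S\dasharrow\PP^2$ induces a group isomorphism $\Bir_\Bbbk(S)\to\Bir_\Bbbk(\PP^2)$, $g\mapsto \varphi g\varphi^{-1}$, under which the biregular involution $\tau$ is sent to $\alpha$; likewise the map $\varphi'$ attached to the second $G$-surface sends $\tau'$ to $\alpha'$. Thus $\alpha=\varphi\tau\varphi^{-1}$ and $\alpha'=\varphi'\tau'\varphi'^{-1}$ as elements of $\Bir_\Bbbk(\PP^2)$, and every identity below is an identity of birational maps, that is, it holds over a Zariski dense open subset.

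For the forward direction, suppose the $G$-surfaces are equivariantly birational, so that there is a birational map $\psi\colon S\dasharrow S'$ with $\psi\tau\psi^{-1}=\tau'$. I would set $\theta=\varphi'\psi\varphi^{-1}\in\Bir_\Bbbk(\PP^2)$ and compute
$$
\theta\alpha\theta^{-1}=\varphi'\psi\varphi^{-1}\,(\varphi\tau\varphi^{-1})\,\varphi\psi^{-1}\varphi'^{-1}=\varphi'(\psi\tau\psi^{-1})\varphi'^{-1}=\varphi'\tau'\varphi'^{-1}=\alpha',
$$
so that $\alpha$ and $\alpha'$ are conjugate in $\Bir_\Bbbk(\PP^2)$.

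For the backward direction, suppose $\theta\in\Bir_\Bbbk(\PP^2)$ satisfies $\theta\alpha\theta^{-1}=\alpha'$. I would set $\psi=\varphi'^{-1}\theta\varphi\colon S\dasharrow S'$ and compute
$$
\psi\tau\psi^{-1}=\varphi'^{-1}\theta\varphi\,(\varphi^{-1}\alpha\varphi)\,\varphi^{-1}\theta^{-1}\varphi'=\varphi'^{-1}(\theta\alpha\theta^{-1})\varphi'=\varphi'^{-1}\alpha'\varphi'=\tau',
$$
which says exactly that $\psi$ realizes an equivariant birational equivalence between $(S,\langle\tau\rangle)$ and $(S',\langle\tau'\rangle)$.

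Since both implications are pure group-theoretic bookkeeping, I do not expect a genuine obstacle here; the only point requiring care is that all compositions and cancellations are performed in the group $\Bir_\Bbbk$ of birational self-maps, where equality means agreement on a dense open set — precisely the meaning given to equivariance in the statement — so that one never needs $\psi$ or $\theta$ to be biregular, only $\tau$ and $\tau'$. This is the standard \emph{dictionary} between conjugacy in the Cremona group and equivariant birational geometry of rational $G$-surfaces; its value is that it lets one replace the classification of involutions up to conjugacy by the $G$-equivariant minimal model program, which is where the real difficulty of the overall project lies.
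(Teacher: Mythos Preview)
Your argument is correct and is exactly the standard transport-of-conjugacy computation underlying this dictionary; the paper states the proposition without proof, treating it as folklore, so there is nothing to compare against. One cosmetic remark: in the statement as written the forward direction is ``conjugate $\Rightarrow$ equivariantly birational'', so your labeling of the two implications is swapped, but both are handled correctly.
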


Hence to classify conjugacy classes of elements of order $2$ in $\Bir_\Bbbk(\PP^2)$, we classify equivariant birational classes of rational $G$-surfaces. For this purpose, take a $G$-surface $(S,\langle\tau\rangle)$ rational over $\Bbbk$ and run a $G$-MMP over $\Bbbk$ (see \cite{Ko-surf}) which ends with a pair $(S^*,\langle\tau^*\rangle)$. There are two possibilities for $S^*$:
\begin{prop*}
Let $\Bbbk=\RR,\CC$ and $(S,H)$ be a $G$-surface rational over $\Bbbk$. Denote by $(S^*,H^*)$ the output of a $G$-MMP over $\Bbbk$. Then $S^*$ belong to one of the two following classes:
\begin{enumerate}
\item[(DP)] $S^*$ is a del Pezzo surface such that $\Pic^G(S^*)\simeq \ZZ$\;;
\item[(CB)] $S^*$ admits a $G$-conic bundle structure over $\PP^1$ and $\Pic^G(S^*)\simeq \ZZ^2$\;;
\end{enumerate}

Here the action of $G$ on $\Pic(S^*)$ is given by $H^*$.
\end{prop*}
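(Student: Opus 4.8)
The plan is to run the $G$-equivariant Minimal Model Program on $(S,H)$ and then read off the two admissible outputs from the structure of a $G$-Mori fibre space. First I would invoke the equivariant Cone and Contraction Theorems for surfaces: since $H\simeq G$ is finite, the $H$-invariant part of the Mori cone $\overline{NE}(S)$ is rational polyhedral in its $K_S$-negative region, and any $K_S$-negative $H$-invariant extremal ray can be contracted by a morphism $f\colon S\to S'$ defined over $\Bbbk$ and commuting with the $H$-action. Such a contraction is of exactly one of three types: (i) a divisorial contraction collapsing an $H$-orbit of pairwise disjoint $(-1)$-curves to smooth points, with $S'$ again a smooth rational $G$-surface; (ii) a fibration onto a curve $B$ whose general fibre is $\PP^1$; (iii) a contraction to a point.

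Next I would argue termination. Each contraction of type (i) strictly decreases $\rg\Pic^G(S)$, which is a positive integer; hence after finitely many such steps we arrive at a pair $(S^*,H^*)$ admitting no $K_{S^*}$-negative $H^*$-invariant extremal ray of divisorial type. The resulting object is therefore a $G$-Mori fibre space, given by a contraction of type (ii) or of type (iii), and in both cases the invariant relative Picard rank equals $1$.

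I would then split according to the base. If the terminal contraction is of type (iii), then $\Pic^G(S^*)$ has rank $1$, so $\Pic^G(S^*)\simeq\ZZ$, necessarily generated by a multiple of $-K_{S^*}$; as $-K_{S^*}$ is relatively ample over a point it is ample, whence $S^*$ is del Pezzo and we are in case (DP). If the contraction is of type (ii), the base $B$ is a smooth projective curve dominated by the rational surface $S^*$, hence $B$ is rational; over $\RR$ one uses that $S^*(\RR)\neq\emptyset$ (as $S$ is $\RR$-rational) to produce a real point of $B$, forcing $B\simeq\PP^1$ rather than a pointless conic. The invariant relative rank being $1$, the fibre class and the pullback of a point of $B$ generate $\Pic^G(S^*)\simeq\ZZ^2$, and the general fibre being a conic yields the $G$-conic bundle structure of case (CB).

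The main obstacle is not termination, which is formal, but the precise identification of the invariant Picard ranks in the real setting. Over $\RR$ one must work with the combined action of $G$ together with the Galois group $\Gal(\CC/\RR)$ on $\Pic(S^*_\CC)$, verify that the base $B$ carries a real point and is therefore $\PP^1$, and confirm that the fibration genuinely has $G$-invariant relative rank $1$. Controlling this $G\times\Gal(\CC/\RR)$-action on the N\'eron--Severi lattice, rather than the naive numerical bookkeeping available over $\CC$, is where the real work lies.
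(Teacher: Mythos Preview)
The paper does not actually supply a proof of this proposition: it is stated as background and the reader is referred to \cite{Ko-surf} for the $G$-MMP machinery. So there is no ``paper's own proof'' to compare against; your sketch \emph{is} the standard argument that any such reference would give.

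Your outline is essentially correct, but one step deserves to be made explicit. You pass directly from ``no more divisorial contractions'' to ``$S^*$ is a $G$-Mori fibre space'', i.e.\ you exclude the possibility that $K_{S^*}$ is nef and the MMP terminates in a minimal model. This is where the rationality hypothesis is used: since $S$ is rational over $\Bbbk$, one has $\kappa(S^*)=\kappa(S)=-\infty$, so $K_{S^*}$ cannot be nef and some $K_{S^*}$-negative $G$-invariant extremal ray must survive; its contraction is then necessarily of fibre type. Without this remark the dichotomy (DP)/(CB) is not yet forced.

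Two smaller comments. In case (iii) your phrase ``$-K_{S^*}$ is relatively ample over a point hence ample'' is fine, but the cleaner justification is simply that a fibre-type extremal contraction to a point has $\rg\Pic^G(S^*)=1$ with $-K_{S^*}$ positive on the (unique) ray, whence $-K_{S^*}$ is ample. In case (ii), your argument that $B\simeq\PP^1$ over $\RR$ is correct: $S^*$ is $\RR$-rational, hence $S^*(\RR)$ is Zariski dense, so $B(\RR)\ne\varnothing$ and the smooth geometrically rational curve $B$ is $\PP^1_\RR$. The closing paragraph about the $G\times\Gal(\CC/\RR)$-action is a fair description of where the genuine content lies, and indeed this is exactly what is packaged into the reference \cite{Ko-surf}.
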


When the hypothese on the invariant part of the Picard group is satisfied we say the $G$-surface, DP or CB,  is \emph{minimal}.

The initial problem of classification of conjugacy classes of birational involutions is now reduced to the $G$-equivariant birational classification of minimal $G$-surfaces belonging to the set $(DP)\cup (CB)$. 
In fact in \cite{cmyz} we went further and gave explicit models of all such pairs.

The two former examples are in $(DP)$: $(S,H)=(\PP^2,\alpha_0)$ and $(S^*,H^*)=(\PP^2,\tau_0)$ for the first example; $(S,H)=(\PP^2,\alpha)$ and $(S^*,H^*)=(S_2,\tau)$ in the second example.

\section{Main invariant: the fixed curve}

Recall that a real variety $X$ is \emph{geometrically rational} if its complexification $X_\CC$ is rational. For example, a smooth geometrically rational real curve $C$ is rational if and only if $C(\RR)\ne\varnothing$. A complex variety is rational if and only if it is geometrically rational.
\begin{prop*}
Let $(S,H)$ and $(S',H')$ be $G$-surfaces and $\varphi \colon S \dasharrow S'$ a $G$-equivariant rational map. 
  If $C$ is a geometrically irrational curve on $S$, its proper transform $C':=\varphi(C)$ is a geometrically irrational curve on $S'$. If furthermore $C$ is fixed by $H$ then $C'$ is fixed by $H'$. If $\varphi$ is birational, the curves $C$ and $C'$ are birational. They are isomorphic if they are smooth. 
\end{prop*}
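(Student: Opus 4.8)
The plan is to turn the rational map $\varphi$ into a pair of honest morphisms and to reduce every assertion to the behaviour of a strict transform under blow-ups. First I would resolve the indeterminacy of $\varphi$: since a rational map from a smooth surface is undefined only at finitely many points, there is a smooth surface $\tilde S$ together with birational morphisms $p\colon\tilde S\to S$ and $q\colon\tilde S\to S'$ such that $\varphi=q\circ p^{-1}$. Because the base points of a $G$-equivariant map form a $\tau$-stable set, this resolution can be taken $G$-equivariant, so that $(\tilde S,\langle\tilde\tau\rangle)$ is again a $G$-surface; but for the curve-level statements I will only use that $p$ and $q$ are birational morphisms. Let $\tilde C\subset\tilde S$ be the strict transform of $C$.

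The geometric heart is a single remark about birational morphisms of smooth surfaces: each factors as a sequence of point blow-ups, so every curve it contracts is a smooth \emph{rational} curve (a component of an exceptional divisor, i.e.\ of a tree of $\PP^1$'s). Since $C$ is geometrically irrational it therefore cannot be contracted by $p$; hence $\tilde C$ is an irreducible curve and $p$ restricts to a birational morphism $\tilde C\to C$, so $\tilde C$ is itself geometrically irrational. For the same reason $\tilde C$ is not contracted by $q$, and when $\varphi$ is birational $q$ restricts to a birational morphism $\tilde C\to C'$ onto the genuine curve $C'=\varphi(C)$. Consequently $C$ and $C'$ are both birational to $\tilde C$: they share the geometric genus of $\tilde C$, so $C'$ is geometrically irrational, and $C$ and $C'$ are birational to one another. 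If in addition $C$ and $C'$ are smooth, then being birational smooth projective curves they are isomorphic, a smooth projective curve being determined up to isomorphism by its function field.

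To propagate the fixed-point condition I would argue directly from equivariance. On a dense open subset $U\subseteq S$ (with finite complement) one has $\varphi\circ\tau=\tau'\circ\varphi$. If $C$ is fixed by $H$, then every $x\in C\cap U$ satisfies $\tau(x)=x$, whence $\tau'(\varphi(x))=\varphi(\tau(x))=\varphi(x)$, so $\varphi(C\cap U)$ lies in the fixed locus of $\tau'$. As $C\cap U$ is dense in $C$ and $\operatorname{Fix}(\tau')$ is closed, its closure $C'$ is contained in $\operatorname{Fix}(\tau')$, i.e.\ $C'$ is fixed by $H'$.

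The point requiring the most care is the non-contraction step, and this is exactly where the hypothesis that $C$ is \emph{geometrically irrational} is indispensable: it is what forbids $C$ from being swallowed by the exceptional locus and what forces the restrictions of $p$ and $q$ to $\tilde C$ to be birational. The hypothesis that $\varphi$ be birational enters precisely to make $q|_{\tilde C}$ birational rather than merely dominant; without it the conclusion can fail, as the anticanonical double cover $\pi\colon S_2\to\PP^2$ of Example~\ref{ex.dp2} shows, where the elliptic curve $\pi^{-1}(L)$ lying over a general line $L$ is sent onto the rational curve $L$. I would therefore carry out the argument for birational $\varphi$, where all four assertions follow cleanly, and treat the equivariance of the resolution only as the bookkeeping needed to remain within the category of $G$-surfaces.
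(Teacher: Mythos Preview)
Your argument is correct and rests on the same observation as the paper's: a rational map between smooth surfaces contracts only geometrically rational curves, so an irrational $C$ is never collapsed. The paper's proof is a two-line sketch that works directly on the domain of definition of $\varphi$ rather than through an explicit resolution $\tilde S$, and it only spells out why $C'$ is a curve, leaving the fixed-locus, birationality, and isomorphism claims to the reader; your version is more detailed (and your caveat about needing $\varphi$ birational for the irrationality of $C'$ is well taken) but follows the same route.
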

\begin{proof}
The proper transform $C'$ is obtained from $C$ in the following way: let $C_0$ be the image of $\varphi$ of the open subset of $C$ where $\varphi$ is defined. The set $C_0$ is a curve because $\varphi$ contracts only geometrically rational curves. Then let $C'$ be the Zariski closure of $C_0$ in $S'$.
\end{proof}

\begin{dfn*}
Let $S$ be a rational surface over $\Bbbk$ and $\tau\in\Aut_\Bbbk(S)$ an element of ordre $2$. Define $F(\tau)$ the normalization of the union of geometrically irrational curves fixed by $\tau$. In particular, $F(\tau)=\varnothing$ if $\tau$ fixes no geometrically  irrational curve.
\end{dfn*}

From the discussion above, $F(\tau)$ is a conjugacy invariant.

\begin{rem*}
 In fact we can prove that in our context, we get exactly two cases, see \cite[Lemma 2.7]{cmyz}:
 \begin{enumerate}
\item $F(\tau)=\varnothing$, or
\item $F(\tau)=C$ where $C$ is a smooth geometrically irreducible curve of genus $g\geq 1$.
\end{enumerate}
\end{rem*}

Returning to the two former examples, we get $F(\tau)=\varnothing$ for the standard Cremona involution and $F(\tau)$ is the smooth non hyperelliptic curve of genus $3$ given by the equation $f_4=0$ in $\PP^2$ in the second example.

\section{Main result}

\subsection{Classification over $\CC$}

Using an equivariant version of Mori theory in dimension two as discussed above, L. Bayle and A. Beauville obtained a very precise classification (see \cite{BB00}):
\begin{thm*}[Bayle-Beauville 2000]
Let $\alpha$ be an element of order $2$ in the group $\Bir_\CC(\PP^2)$. Then $\alpha$ is conjugate in $\Bir_\CC(\PP^2)$ to one and only one of the following $4$ classes of involution:
\begin{enumerate}
\item The linear involution on $\PP^2$ given by
$
\tau_0\colon  [x:y:z]\dashrightarrow [x: y: -z]\;.
$
 
 $F(\alpha)=\varnothing$.
 \item A Bertini involution (analogously to Example~\ref{ex.dp2}, a biregular model is the deck involution of a del Pezzo surface of degree~$1$ given by a double cover of the quadric cone with branch locus the fixed curve of the involution).
  
  $F(\alpha)$ is a smooth non hyperelliptic curve of genus $g=4$ canonically  embedded in a quadric cone.
\item A Geiser involution (see Example~\ref{ex.dp2}). 
 
 $F(\alpha)$ is a smooth non hyperelliptic curve of genus $g=3$.
\item A de Jonqui\`{e}res involution (see Section~\ref{section.jonquieres}). 

$F(\alpha)$ is a smooth hyperelliptic curve of genus $g\geq 1$.
\end{enumerate}
\end{thm*}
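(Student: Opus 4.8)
The plan is to follow the reduction set up in the previous sections and then to read off the classification from the fixed curve. First I would invoke the regularization proposition to replace $\alpha$ by a biregular involution, obtaining a smooth rational $G$-surface $(S,\langle\tau\rangle)$ with $\tau=\varphi^{-1}\alpha\varphi$ for some birational $\varphi\colon S\dashrightarrow\PP^2$. Running a $G$-MMP produces a minimal pair $(S^*,\langle\tau^*\rangle)$ which, by the trichotomy proposition, is either a del Pezzo surface with $\Pic^G(S^*)\cong\ZZ$ (case (DP)) or a $G$-conic bundle over $\PP^1$ with $\Pic^G(S^*)\cong\ZZ^2$ (case (CB)). By the conjugacy proposition the conjugacy class of $\alpha$ coincides with the equivariant birational class of $(S^*,\langle\tau^*\rangle)$, so it suffices to classify these minimal models and to compute $F(\tau^*)$ in each.

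In case (DP) the decisive remark is that $-K_{S^*}$ is always $\tau^*$-invariant, so $\Pic^G(S^*)\cong\ZZ$ forces the $(+1)$-eigenspace of $\tau^*$ on $\Pic(S^*)\otimes\RR$ to be exactly $\RR\cdot K_{S^*}$; equivalently $\tau^*$ acts as $-\mathrm{id}$ on the orthogonal lattice $K_{S^*}^\perp$. If $S^*=\PP^2$ this is automatic, $\tau^*$ is a linear involution conjugate to $\tau_0$, and $F(\tau^*)=\varnothing$. If $S^*\neq\PP^2$, then $-\mathrm{id}$ must be realised by an automorphism acting on $K_{S^*}^\perp$, and among del Pezzo surfaces this occurs only for $\PP^1\times\PP^1$ (the factor swap) and in degrees $2$ and $1$ (the only degrees for which $-\mathrm{id}$ lies in the relevant Weyl group, $W(E_7)$ and $W(E_8)$, and is geometrically realised). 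The swap on $\PP^1\times\PP^1$ fixes the rational diagonal, so $F(\tau^*)=\varnothing$ and this case reduces to the linear model. In degree $2$, $\tau^*$ is the deck transformation of the anticanonical double cover $S^*\to\PP^2$ branched along a smooth quartic $C$, that is the Geiser involution, and $F(\tau^*)=C$ is non-hyperelliptic of genus $3$. In degree $1$, $\tau^*$ is the deck transformation of the degree-$2$ morphism $S^*\to Q$ onto a quadric cone defined by $|-2K_{S^*}|$, that is the Bertini involution, and $F(\tau^*)$ is the branch curve, canonically embedded in $Q$ of genus $4$.

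In case (CB) let $\pi\colon S^*\to\PP^1$ be the $G$-invariant conic bundle. The involution $\tau^*$ preserves $\pi$, and on a general fibre $\pi^{-1}(t)\cong\PP^1$ it restricts to a nontrivial involution with two fixed points; as $t$ varies these sweep out a bisection $C=F(\tau^*)$. Restricting $\pi$ exhibits $C$ as a double cover $C\to\PP^1$, so $C$ is hyperelliptic (or rational), and the degenerate fibres of the bundle determine the branch points and hence the genus $g$. When $g\geq1$ this is a de Jonquières involution; when $g=0$ the bisection is rational, $F(\tau^*)=\varnothing$, and the case reduces again to the linear model.

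Finally, since $F(\tau)$ is a conjugacy invariant, the four types are mutually exclusive and exhaustiveness follows from the MMP trichotomy: emptiness of $F(\tau)$ singles out the linear class, while among the others the hyperelliptic versus non-hyperelliptic dichotomy separates de Jonquières from the pair $\{$Geiser, Bertini$\}$ and the genus ($3$ versus $4$) separates Geiser from Bertini, giving ``one and only one''. I expect the main obstacle to lie not in this bookkeeping but in the precise classification of the minimal models: on the del Pezzo side one must verify that no degree other than $\PP^2$, $2$ and $1$ yields a new equivariant birational class with $\Pic^G\cong\ZZ$, which rests on determining exactly which Weyl-group involutions are realised by automorphisms; and on the conic bundle side one must confirm that every such involution is de Jonquières with the stated hyperelliptic fixed curve and that the construction is rigid enough that non-isomorphic fixed curves are never conjugated. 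This realisation-and-rigidity analysis is the technical heart of Bayle and Beauville's argument.
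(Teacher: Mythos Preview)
The paper does not prove this theorem; it is quoted as a result of Bayle and Beauville \cite{BB00}, prefaced only by the remark that it is obtained ``using an equivariant version of Mori theory in dimension two as discussed above''. Your outline is exactly that framework: regularize, run a $G$-MMP, and analyse the (DP) and (CB) outputs via the fixed curve invariant. So at the level of strategy you are in complete agreement with what the paper presents, and with what Bayle--Beauville actually do.

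Two small points to tighten. In the (CB) case you silently assume that $\tau^*$ acts trivially on the base $\PP^1$; if it acts nontrivially, the fixed locus lies in at most two fibres, hence is geometrically rational and $F(\tau^*)=\varnothing$, so this branch also collapses to the linear class --- but it should be said. Second, to conclude that every (CB) involution is de Jonqui\`eres you need the existence of a section of $\pi$; the paper notes (Section~\ref{section.jonquieres}) that over $\CC$ one can always contract one component of each singular fibre to reach a Hirzebruch surface, whose $\PP^1$-bundle structure provides a section to pull back. With those two remarks added, your sketch is a faithful summary of the Bayle--Beauville argument, and you correctly flag that the substantive work is the Weyl-group realisation analysis on the del Pezzo side and the rigidity (fixed curve determines conjugacy class) on both sides.
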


Except for the case (1), all these involutions have moduli \cite{BB00}.
Namely, conjugacy classes of de Jonqui\`{e}res involutions of genus $g\geqslant 1$ are parametrized by hyperelliptic curves of genus~$g\geqslant 1$.
Conjugacy classes of Geiser involutions are parametrized by non-hyperelliptic curves of genus~$3$,
and conjugacy classes of Bertini involutions are parametrized by non-hyperelliptic curves of genus~$4$ canonically  embedded in a quadric cone.

\subsection{Classification over $\RR$}
The first new involution in this context is the antipodal map on the quadric sphere.
In \cite{cmyz}, we discovered $7$ additional classes of involutions in $\Bir_\RR(\PP^2)$ and called them $d$-twisted Trepalin involutions, $d=0,1,2$, Kowalevskaya involution and $d$-twisted Iskovskikh involutions $d=0,1,2$, see Section~\ref{section.isk} for Iskovskikh involutions and \cite{cmyz} for the definitions of the others.

\begin{thmmain}[Cheltsov-Mangolte-Yasinsky-Zimmermann 2024]
\label{theorem:CMYZ}
Let $\alpha$ be an element of order $2$ in the group $\Bir_\RR(\PP^2)$. Then $\alpha$ is conjugate in $\Bir_\RR(\PP^2)$ to one of the following $12$ classes of involution:
\begin{enumerate}
\item The linear involution $\tau_0$ or the antipodal involution on the quadric sphere or a $t$-twisted Trepalin involution, $t=0,1,2$. 

$F(\alpha)=\varnothing$: 
 \item A Bertini involution.
 
 $F(\alpha)$ is a smooth non hyperelliptic curve of genus $g=4$ canonically  embedded in a quadric cone. 
 \item A Geiser involution. 
 
 $F(\alpha)$ is a smooth non hyperelliptic curve of genus $g=3$. 
  \item A Kowalevskaya involution. 
 
 $F(\alpha)$ is a smooth elliptic curve. 
 \item A de Jonqui\`{e}res involution or a $t$-twisted Iskovskikh involutions $t=0,1,2$ (see Section~\ref{section.isk}).
 
$F(\alpha)$ is a smooth hyperelliptic curve of genus $g\geq 1$.
 \end{enumerate}
\end{thmmain}

Furthermore, involutions in different classes are not conjugate except some exceptions when the fixed curve is elliptic, see \cite[Main Theorem]{cmyz} for details.

In contrast with the complex case, fixed curves does not parametrize conjugacy classes. See \cite[Main Corollary]{cmyz}

\begin{cormain}
 Let $C$ be a~real smooth projective hyperelliptic curve of genus $g\geqslant 2$ such that the~locus $C(\RR)$ consists of at least $2$ connected components.
Then $\Bir_\RR(\PP^2)$ contains uncountably many non-conjugate involutions that all fix a~curve isomorphic to the~curve $C$. Besides, the real plane Cremona group $\Bir_\RR(\PP^2)$ contains uncountably many non-conjugate involutions that fix no geometrically irrational curves.
\end{cormain}

\section{Birational models of $G$-conic bundles over $\RR$}
\label{section.cb}
To illustrate the methods used in the proof of the main theorem above, we will focus now on the case where  $S^*$ is in the case $(CB)$, see Section~\ref{section.mmp}.

Let $\Bbbk=\RR,\CC$ and $S$ be a smooth surface defined over $\Bbbk$ endowed with a biregular involution $\tau$. Assume that $S$ admits a $G$-equivariant morphism $\pi\colon S\to\PP^1$ whose fibers are conics. Assume furthermore that $\Pic^G(S^*)\simeq \ZZ^2$.  We have the following, \cite[Lemma 2.7]{cmyz} for a proof.

\begin{lem*}
If $C=F(\tau)$ is a geometrically irrational curve, then $\tau$ acts trivially on the base $\PP^1$ and $C$ is a double section of $\pi$.
\end{lem*}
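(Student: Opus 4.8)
The plan is to analyze the action of $\tau$ on the base $\PP^1$ and then on the fibers separately. First I would observe that since $\pi$ is $G$-equivariant, $\tau$ descends to an involution $\bar\tau$ of the base $\PP^1$. The fixed curve $C=F(\tau)$ is geometrically irrational by hypothesis, hence its image $\pi(C)\subset\PP^1$ is either a point or all of $\PP^1$. If $\pi(C)$ were a single point, then $C$ would be contained in a single fiber of $\pi$; but the fibers are conics, hence geometrically rational, and a geometrically rational curve cannot contain a geometrically irrational one. Therefore $\pi(C)=\PP^1$, so $C$ dominates the base.

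Next I would show that $\bar\tau$ is trivial. Since $C$ is pointwise fixed by $\tau$ and dominates $\PP^1$, a dense set of points of $\PP^1$ lies under fixed points of $\tau$, forcing $\bar\tau=\id$ on $\PP^1$. (More carefully: for a general point $p\in\PP^1$, the fiber $\pi^{-1}(p)$ meets $C$ in at least one point $x$, and $\tau(x)=x$ gives $\bar\tau(p)=\pi(\tau(x))=\pi(x)=p$; a nontrivial automorphism of $\PP^1$ has only finitely many fixed points, so $\bar\tau=\id$.) Consequently $\tau$ preserves each fiber of $\pi$, acting as an involution on each conic.

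It then remains to identify $C$ as a double section, i.e. to show that $C$ meets the general fiber in exactly two points. Because $\tau$ now acts fiberwise as a nontrivial involution of a conic $\cong\PP^1$ (it is nontrivial on a general fiber, else $\tau=\id$), its fixed locus on each general fiber consists of exactly two points, the two fixed points of the involution on $\PP^1$. The fixed curve $C$ meets the general fiber precisely in this fixed locus, hence in two points, so $\pi|_C\colon C\to\PP^1$ has degree $2$ and $C$ is a double section. I would use the hypothesis $\Pic^G(S)\simeq\ZZ^2$ to rule out the degenerate possibility that $C$ is a section together with a second component, or that $C$ splits off a fiber component; the minimality encoded in the rank-$2$ invariant Picard group guarantees the conic bundle has no $G$-fixed $(-1)$-curves to contract and keeps $C$ irreducible as a genuine double section.

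The main obstacle I expect is the last step: controlling the behavior of $\tau$ on the singular fibers of the conic bundle. On a singular fiber (a pair of lines meeting at a point), the fiberwise involution may swap the two components or fix the node, and one must ensure that the global structure of $C$ as a double section is not disrupted by these special fibers. This is precisely where the hypothesis on $\Pic^G(S)$ and the $G$-MMP minimality should be invoked to constrain how $\tau$ can permute the components of reducible fibers, guaranteeing that $C$ remains a well-defined irreducible double section rather than acquiring vertical components.
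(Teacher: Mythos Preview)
The paper does not actually supply a proof of this lemma; it simply refers the reader to \cite[Lemma~2.7]{cmyz}. So there is no in-paper argument to compare against, only the statement.

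Your argument for the two conclusions is the natural one and is correct. The first part (triviality of $\bar\tau$) is exactly right: $C$ cannot sit in a fiber because fibers are geometrically rational, hence $C$ dominates $\PP^1$, and a pointwise-fixed dominant curve forces $\bar\tau=\id$. The second part (double section) is also right: a nontrivial involution on a smooth conic has exactly two geometric fixed points, so the horizontal fixed locus has degree~$2$ over the base.

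Where you over-complicate things is in the final paragraph. You do not need $\Pic^G(S)\simeq\ZZ^2$ to rule out ``$C$ is a section plus another component'' or vertical pieces. Recall the definition: $F(\tau)$ is the normalization of the union of the \emph{geometrically irrational} fixed curves. Sections of $\pi$ are rational, and fiber components are rational, so none of them can appear in $F(\tau)$ at all. Thus if the degree-$2$ horizontal fixed locus split as two sections, $F(\tau)$ would be empty, contradicting the hypothesis that $C=F(\tau)$ is a geometrically irrational curve. The hypothesis itself already forces the horizontal fixed locus to be an irreducible double cover of the base. The minimality condition on $\Pic^G$ plays no role in this lemma; it enters later, in the analysis of how $\tau$ and $\sigma$ permute components of singular fibers.
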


Assume from now on that $\tau$ acts trivially on the base of the $G$-conic bundle $\pi\colon S\to\PP^1$. In this case, a general complex fiber of $\pi$ is a smooth conic on which $\tau$ restricts to an involution and there is a finite number of singular fibers which are unions of two smooth complex rational curves $F_1$, $F_2$ intersecting transversally in one point. Each $F_i$, $i=1,2$ is a $(-1)$-curve on the complexification $S_\CC$ of $S$.

Over $\CC$, for each singular fiber, we must have $\tau(F_i)=F_{3-i}$ because $\Pic^G(S^*)\simeq \ZZ^2$.

Over $\RR$, denoting by $\sigma$ the real structure on the complexification $S_\CC$ ($\sigma$ is an anti-holomorphic involution on $S_\CC$) , at least one of the two involutions $\tau$ or $\sigma$ must exchanges $F_1$ and $F_2$ for the same reason.

\subsection{De Jonqui\`{e}res involutions}
\label{section.jonquieres}
Firstly assume that $\pi$ admits a section $Z$ defined over $\Bbbk$. Then $Z+\tau(Z)$ is $G$-invariant and  there exists a $G$-equivariant birational map $\chi\colon S\dasharrow X$ that fits into the following commutative $G$-equivariant diagram:
$$
\xymatrix{
S\ar@{->}[d]_{\pi}\ar@{-->}[rr]^\chi&& X\ar@{->}[d]^{\eta}\ar@{->}[rr]^{\rho}&&Y\ar@{-->}[dll]\\
\PP^1\ar@{=}[rr]&&\PP^1&&}
$$
where $X$ is a smooth surface, $\eta$ is a conic bundle such that $\Pic(X)^G\simeq\mathbb{Z}^2$,
$Y$ is a hypersurface in $\mathbb{P}(d,d,1,1)$ of degree $2d=8-K_S^2$ that is given  by
\begin{equation*}
xy=f(z,t)
\end{equation*}

for some homogeneous polynomial $f(z,t)$ of degree $2d$ that has no multiple roots. The map $Y\dasharrow\PP^1$ is given by
$$
\big[x:y:z:t\big]\mapsto \big[z:t\big],
$$
where $x$, $y$, $z$ and $t$ are coordinates on $\PP(d,d,1,1)$ of weights $d$, $d$, $1$ and $1$, respectively.

The curves $Z$ and $\tau(Z)$ are $\rho\circ\chi$-exceptional, the involution $\tau$ acts on the surface $Y$ as
$$
\big[x:y:z:t\big]\mapsto \big[y:x:z:t\big],
$$
and the morphism $\rho$ is a minimal resolution of singularities.

The fixed locus of $\tau$ is the 
curve $C\simeq\rho(C)$, where $\rho(C)$ is given by
$$
\left\{\aligned
&x=y,\\
&x^2=f(z,t).
\endaligned
\right.
$$

If $d\geqslant 3$, then $\rho(C)$ is a real hyperelliptic curve of genus $g=d-1$ with hyperelliptic covering
\[
\nu\colon C\to\PP^1,\quad [x:y:z:t]\mapsto [z:t].
\]

Similarly, if $d=2$, then $\rho(C)$ is an elliptic curve. The number of real roots $r$ of $f$ is even and the number of connected components of $C(\RR)$ is $\frac12 r$.

If $\Bbbk=\CC$, forgetting the action of $G$, we can always contract one of the $(-1)$-curves in any singular fiber of $\pi$ and obtain a locally trivial $\PP^1$-fibration $S'\to\PP^1$ ($S'$ is an Hirzebruch surface). Any such fibration has a complex section whose pullback $Z$ is a section of $\pi\colon S\to\PP^1$. Hence any $G$-surface for which $S^*$ is in the case $(CB)$ admits such a model $Y\dasharrow\PP^1$. In this case, $\tau$ is called a \emph{de Jonqui\`{e}res} involution.

If $\Bbbk=\RR$, we cannot contract a $(-1)$-curve if it's not defined over $\RR$ (case $\sigma(F_i)=F_{3-i}$). 
So we need to consider another model when $\pi\colon S\to\PP^1$ has no real section.

As a step in the classification, we get the following characterization of de Jonqui\`{e}res involutions over $\RR$, see \cite[Proposition~6.5]{cmyz}.

\begin{prop*}
Let $S$ be a real rational surface and $\pi\colon S\to \PP^1$ a $G$-conic bundle with biregular involution $\tau$ acting trivially on the base and such that $\Pic^G(S)\simeq \ZZ^2$. Then $\tau$ is a de Jonqui\`{e}res involution if and only if 
$$
\pi(S(\RR))=\PP^1(\RR)\approx S^1\;.
$$
\end{prop*}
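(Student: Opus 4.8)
The plan is to use the characterisation recorded just above the statement: by construction, $\tau$ is a de Jonqui\`eres involution precisely when the conic bundle $\pi\colon S\to\PP^1$ admits a section defined over $\RR$. So it suffices to prove that $\pi$ has a real section if and only if $\pi(S(\RR))=\PP^1(\RR)$. One direction is immediate: if $Z\subset S$ is a section defined over $\RR$, then for every $p\in\PP^1(\RR)$ the point $Z(p)$ is a real point of the fibre $\pi^{-1}(p)$, whence $p\in\pi(S(\RR))$ and $\pi(S(\RR))=\PP^1(\RR)$.

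For the converse I would first analyse the fibres over real points. A smooth fibre is a real conic, hence isomorphic to $\PP^1$ over $\RR$ exactly when its ternary quadratic form is indefinite, and an empty conic when the form is definite. Over a real point the singular fibres are of two kinds: \emph{split}, where $\sigma$ fixes each component $F_i$ (two real lines), and \emph{non-split}, where $\sigma(F_1)=F_2$ (conjugate lines meeting at a real node whose real locus is the single node). Writing the degeneration in a local normal form $x^2\pm y^2+tz^2=0$ shows that the signature of the adjacent smooth fibres is unchanged across a split fibre but jumps across a non-split one: a non-split fibre is flanked by an empty conic on one side and a nonempty conic on the other, whereas a split fibre is flanked by nonempty conics on both sides. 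Consequently the hypothesis $\pi(S(\RR))=\PP^1(\RR)$, which forces every smooth real fibre to be nonempty, rules out non-split fibres over real points altogether; all real singular fibres are then split, and every real fibre has a real point.

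Next I would produce the section by passing to a $\PP^1$-bundle model over $\RR$. Since there are no non-split fibres over real points, I can choose, $\sigma$-equivariantly, one $(-1)$-curve in each singular fibre: a real component in each (split) real singular fibre, and a conjugate pair of components over each pair of complex-conjugate singular fibres. Contracting this $\sigma$-invariant collection yields a morphism $S\to S'$ defined over $\RR$ onto a smooth conic bundle $\pi'\colon S'\to\PP^1$ with no singular fibres, that is, a $\PP^1$-bundle over $\PP^1$ over $\RR$, all of whose real fibres are again nonempty. It then remains to find a real section of $\pi'$: its strict transform, equivalently a rational section of $\pi$ extended by properness (as $\PP^1$ is a smooth curve), furnishes a real section of $\pi$, so that $\tau$ is de Jonqui\`eres.

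The existence of a real section of $\pi'$ is the arithmetic heart of the argument, and the step I expect to be the main obstacle. Its generic fibre is a conic over $K=\RR(t)$ that becomes isotropic over $\CC(t)$ by Tsen's theorem, hence is the conic attached to a quaternion algebra $(-1,b)$ with $b\in K^\ast$; it carries a $K$-point if and only if $b$ is totally positive, equivalently a sum of two squares in $K$ (the Pythagoras number of $\RR(t)$ being $2$), equivalently the form $x^2+y^2-bz^2$ is indefinite at every ordering of $K$. The signature analysis of the previous step identifies this last condition with ``every real fibre nonempty'', which we have established; therefore $\pi'$ has a real section and the converse follows. The delicate point is precisely this descent from a complex section (free by Tsen) to a real one: it is here that the global nonemptiness hypothesis is consumed, and the argument would break down for any conic bundle possessing even a single empty real fibre.
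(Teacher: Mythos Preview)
The paper does not give a proof of this proposition; it simply refers to \cite[Proposition~6.5]{cmyz}. So there is no in--paper argument to compare against, and I have evaluated your proposal on its own merits.

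Your strategy is the natural one and it is correct. The reduction to ``$\pi$ has a real section $\Leftrightarrow$ $\pi(S(\RR))=\PP^1(\RR)$'' is exactly how the paper sets things up, and the forward implication is indeed trivial. For the converse, your signature analysis of the real singular fibres is right: at a split fibre the rank--$2$ form has signature $(1,1)$, so the adjacent signatures are $(2,1)$ and $(1,2)$, both isotropic; at a non--split fibre the rank--$2$ part is definite and exactly one side becomes anisotropic. Hence surjectivity of $\pi$ on real points forces every real singular fibre to be split, and your $\sigma$--equivariant contraction to a smooth real conic bundle $\pi'\colon S'\to\PP^1$ with all real fibres nonempty goes through.

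The final step, the one you flag as delicate, is also fine, and can in fact be said more crisply than via orderings of $\RR(t)$. Since $\pi'$ has no singular fibres it is an \'etale $\PP^1$--bundle, hence is classified by a class in $\mathrm{Br}(\PP^1_\RR)=\mathrm{Br}(\RR)\simeq\ZZ/2$; specialising at any $t_0\in\PP^1(\RR)$ returns that constant class as the Brauer class of the fibre. A single real fibre with a real point therefore forces the class to be trivial, whence $\pi'$ is the projectivisation of a rank--$2$ bundle and admits a real section. Your formulation through totally positive elements and the Pythagoras number of $\RR(t)$ reaches the same conclusion, but the identification ``indefinite at every ordering $\Leftrightarrow$ every real fibre nonempty'' implicitly uses that for an \emph{unramified} conic the signature is locally constant along $\PP^1(\RR)$, which is exactly the smoothness of $\pi'$ you arranged; it would be worth making that dependence explicit.
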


\subsection{$d$-twisted Iskovskikh involutions}
\label{section.isk}

In the case $\pi(S(\RR))\subsetneq\PP^1(\RR)$, we prove first the existence of a good model in Theorem~\ref{thm.model.existence}, then its unicity in Theorem~\ref{thm.model.unicity}, see \cite[Theorems 7.1 and 7.6]{cmyz}.

\begin{thm}
\label{thm.model.existence}
 Let $\pi\colon S\to \PP^1$ be a minimal real rational $G$-conic bundle with biregular involution $\tau$ acting trivially on the base. 
 Assume that $\pi(S(\RR))\subsetneq\PP^1(\RR)$.
Then there exists $G$-equivariant commutative diagram
\begin{equation*}
\xymatrix{
S\ar@{->}[d]_{\pi}\ar@{-->}[rr]^{\chi}&&X\ar@{->}[d]^{\eta}\\
\PP^1\ar@{->}[rr]_\phi&&\PP^1}
\end{equation*}
where $\chi$ is a birational map, $\phi\in\mathrm{PGL}_2(\mathbb{R})$, $X$ is a smooth surface, $\eta$ is a $G$-minimal conic bundle,
the fiber $\eta^{-1}([1:0])$ is smooth and does not have real points,
the quasi-projective surface $Y=X\setminus \eta^{-1}([1:0])$ is given in $\mathbb{P}^2\times\mathbb{A}^1$ with coordinates $([x:y:z],t)$ 
\begin{equation*}
A(t)x^2+B(t)xy+C(t)y^2=H(t)z^2
\end{equation*}
for some polynomials $A,B,C,H\in\mathbb{R}[t]$ such that $\Delta=(B^2-4AC)H$ does not have multiple roots and $\deg(\Delta)$ is even,
the involution $\tau$ acts on the surface $Y$ by
$$
([x:y:z],t)\mapsto([x:y:-z],t),
$$
and the restriction map $\eta\vert_{Y}\colon Y\to\mathbb{P}^1\setminus [1:0]=\mathbb{A}^1$ is the map given by $([x:y:z],t)\mapsto t$.
Moreover, the following holds:
\begin{itemize}
\item the polynomial $H(t)$ has only real roots and its leading coefficient is negative,
\item fibers of $\eta$ over roots of the polynomial $H(t)$ are singular irreducible conics ($\sigma(F_i)=F_{3-i}$).
\end{itemize}
\end{thm}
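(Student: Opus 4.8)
The plan is to put $\pi$ into normal form by diagonalizing $\tau$ after embedding the conic bundle in a $\PP^2$-bundle, and to extract the reality conditions only at the end. Since $\pi$ is a conic bundle, $-K_S$ has relative degree $2$, so $\cE:=\pi_*\cO_S(-K_S)$ is locally free of rank $3$ on $\PP^1$ and the relative anticanonical map realizes $S$ as a family of plane conics in $\PP(\cE)$. Because $\tau$ acts trivially on the base it preserves every fiber and fixes $-K_S$, hence acts linearly on $\cE$ as an involution defined over $\RR$.

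First I would read off the shape of $\tau$ from its fixed locus. By the lemma recalled above, $C=F(\tau)$ is a double section, so on a general smooth fiber $\tau$ is an involution of $\PP^1$ with exactly two fixed points, namely the two points of $C$; such an involution is diagonalizable. Hence on the ambient $\PP^2$ the eigenvalue $+1$ occurs with multiplicity $2$ (the plane spanned by the chord of $C$) and $-1$ with multiplicity $1$ (the isolated fixed point), so $\cE$ splits $G$-equivariantly over $\RR$ as $\cE=\cE^+\oplus\cE^-$ with $\rg\cE^+=2$ and $\rg\cE^-=1$. The defining quadratic form is $\tau$-invariant, hence has no cross terms between the two summands. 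Trivializing over an affine chart $\mathbb{A}^1\subset\PP^1$ with coordinate $t$, it reads $A(t)x^2+B(t)xy+C(t)y^2=H(t)z^2$ with $(x,y)$ on $\cE^+$ and $z$ on $\cE^-$, and $\tau$ becomes $([x:y:z],t)\mapsto([x:y:-z],t)$. This produces the asserted equation and action, with $\chi$ recording this identification together with the mild equivariant modifications used below.

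Next I would choose the fiber at infinity using the hypothesis $\pi(S(\RR))\subsetneq\PP^1(\RR)$. A smooth real fiber has no real points exactly when the ternary form $A x^2+Bxy+Cy^2-Hz^2$ is definite, i.e. when $B^2-4AC<0$ and $H$ has the sign completing it to a definite form; such empty fibers form a nonempty open subset of $\PP^1(\RR)$. I would pick a smooth empty real fiber, send it to $[1:0]$ by some $\phi\in\PGL_2(\RR)$, and delete it to form $Y$, so that $\eta^{-1}([1:0])$ is smooth with no real points. As the deleted fiber is empty, $H$ is negative near $t=\infty$, which after a sign normalization makes the leading coefficient of $H$ negative. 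The singular fibers sit over the roots of $\Delta=(B^2-4AC)H$: over a root of $B^2-4AC$ the fiber is $(\mathrm{linear})^2=Hz^2$, a pair of lines exchanged by $\tau$, while over a root of $H$ it is $A x^2+Bxy+Cy^2=0$, a pair of lines through $[0:0:1]$ each preserved by $\tau$; by the dichotomy recalled in Section~\ref{section.cb} the latter are exchanged by $\sigma$, hence are conjugate irreducible real conics, as claimed.

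Finally I would pin down the conditions on $\Delta$ together with $G$-minimality, and this is where I expect the real work to be. Squarefreeness of $\Delta$ is the statement that every fiber of $\eta$ is reduced, which follows from $G$-minimality of $S$ and is preserved by the equivariant modifications above. That $H$ has \emph{only} real roots I would deduce from minimality: a conjugate pair of non-real roots of $H$ would create four complex $(-1)$-curves permuted by $\langle\sigma,\tau\rangle$, and a real combination of them could be contracted equivariantly, lowering $\Pic^G$ below rank $2$. The even degree of $\Delta$ records the number $8-K_X^2$ of degenerate fibers and is fixed by the normalization at infinity. The main obstacle is precisely this reality bookkeeping: the algebraic diagonalization of $\tau$ is formal, but matching the sign patterns of $H$ and of $B^2-4AC$ along $\PP^1(\RR)$ to the topology of $\pi(S(\RR))$, while simultaneously keeping $\eta$ $G$-minimal, $\Delta$ squarefree of even degree, and the deleted fiber smooth and real-point-free, is delicate, and is exactly where the hypothesis $\pi(S(\RR))\subsetneq\PP^1(\RR)$ enters.
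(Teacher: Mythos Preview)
The paper itself does not prove this theorem: it is stated and attributed to \cite[Theorem~7.1]{cmyz}, with no argument given in this note. So there is no ``paper's own proof'' to compare against; I can only assess your sketch on its own merits.

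Your overall strategy is the natural one and is sound: realize $S$ inside $\PP(\pi_*\cO_S(-K_S))$, split this rank-$3$ bundle into $\tau$-eigenbundles of ranks $2$ and $1$ using that $F(\tau)$ is a bisection, and read off the equation $A x^2+Bxy+Cy^2=Hz^2$ with $\tau$ acting by $z\mapsto -z$. This part is clean and needs no birational modification, so in fact you may take $X=S$ and $\chi$ an isomorphism after composing with $\phi$; your hedging about ``mild equivariant modifications'' is unnecessary here. Your fiber-by-fiber analysis of the singular locus is also correct: over a root of $H$ the two lines through $[0:0:1]$ are each $\tau$-stable, so $G$-minimality forces $\sigma$ to swap them, hence that root is real and $B^2-4AC<0$ there; and your Picard-rank argument correctly excludes non-real roots of $H$, since $F_1+\sigma(F_1)$ would then be an extra $\langle\sigma,\tau\rangle$-invariant class.

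The one point you do not actually justify is the parity of $\deg\Delta$. Saying it ``records $8-K_X^2$'' conflates two things: the number of singular fibers (intrinsic) and $\deg\Delta$ (trivialization-dependent). Rescaling $z$ by $t^k$ shifts $\deg H$ by $2k$, so parity is a genuine constraint, not a free normalization. What pins it down is the requirement that the equation extend across $[1:0]$ to a \emph{smooth} conic with no real points; passing to the chart $s=1/t$ and matching degrees (equivalently, computing globally in $\PP(\cE)$ with $\cE^+\oplus\cE^-=\cO(a)\oplus\cO(b)\oplus\cO(c)$) is where both the even degree and the sign of the leading coefficient of $H$ come from. You flag the ``reality bookkeeping'' as the delicate part, which is fair, but this parity step is a concrete missing computation rather than mere bookkeeping.
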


\begin{figure}[!htb]
\begin{picture}(0,0)%
\includegraphics{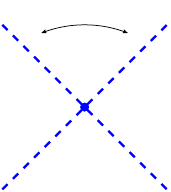}%
\end{picture}%
\setlength{\unitlength}{1579sp}%
\begingroup\makeatletter\ifx\SetFigFont\undefined%
\gdef\SetFigFont#1#2#3#4#5{%
  \reset@font\fontsize{#1}{#2pt}%
  \fontfamily{#3}\fontseries{#4}\fontshape{#5}%
  \selectfont}%
\fi\endgroup%
\begin{picture}(3381,3799)(5098,-7639)
\put(6789,-4179){\makebox(0,0)[b]{\smash{{\SetFigFont{10}{12.0}{\rmdefault}{\mddefault}{\updefault}{\color[rgb]{0,0,0}$\sigma$}%
}}}}
\end{picture}%
\caption{Singular irreducible real fiber}  
\end{figure}

\begin{dfn*}
In the assumptions of Theorem~\ref{thm.model.existence}, let $d=\deg H$. We call $\tau$ a
 \emph{$d$-twisted Iskovskikh involution}.
\end{dfn*}

The fixed curve $C=F(\tau)$ is  given by 
\begin{equation}
\label{equation.hyper}
 w^2=B^2-4AC\;.
\end{equation}

It is elliptic if $\deg(B^2-4AC)=4$ and hyperelliptic if $\deg(B^2-4AC)\geq6$. 

Indeed, the fixed curve is given by $z=0$ which gives $Ax^2+Bxy+Cy^2=0$. Letting $w:=2(\frac xy A+\frac12 B)$ we get \eqref{equation.hyper}.

\begin{thm}
\label{thm.model.unicity}
In the assumptions of Theorem~\ref{thm.model.existence}, two $G$-conic bundles $\eta\colon X\to \PP^1,\eta\colon X'\to \PP^1$ are $G$-equivariantly birational if and only if
\begin{enumerate}
\item They have same discriminant loci $\{\Delta=0\}=\{\Delta'=0\}$\;;
\item They have the same real interval $\eta(X(\RR))=\eta'(X'(\RR))$.
\item Sign conditions: $B^2-4AC=\lambda ({B'}^2-4A'C')$ and $H=\mu H'$ for some positive real numbers $\lambda, \mu$.
\end{enumerate}

Note that the third condition implies the first one. I put them like this for didactical purposes.
\end{thm}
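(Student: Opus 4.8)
The plan is to prove Theorem~\ref{thm.model.unicity} by separating the ``only if'' and ``if'' directions, using throughout the birational invariants established earlier in the paper. For the \emph{only if} direction, suppose $\chi\colon X\dasharrow X'$ is a $G$-equivariant birational map between two such conic bundle models. The fixed curve $C=F(\tau)$ is a conjugacy invariant (from the discussion following the Definition of $F(\tau)$), and by the Proposition on proper transforms the curve $C$ transforms to $C'$, with the two being isomorphic as they are smooth. Since $C$ is cut out by $w^2=B^2-4AC$, an isomorphism $C\simeq C'$ forces $B^2-4AC$ and $B'^2-4A'C'$ to agree up to the action of $\mathrm{PGL}_2(\RR)$ on $\PP^1$ and a scalar; tracking the real-point structure (the number and position of real roots, which controls $C(\RR)$) upgrades this to the sign condition $B^2-4AC=\lambda(B'^2-4A'C')$ with $\lambda>0$. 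The second condition, equality of the real intervals $\eta(X(\RR))=\eta'(X'(\RR))$, follows because $\eta(X(\RR))\subsetneq\PP^1(\RR)$ is itself a $G$-birational invariant: any $G$-equivariant birational map of minimal conic bundles acts on the base by an element $\phi\in\mathrm{PGL}_2(\RR)$ (as in Theorem~\ref{thm.model.existence}), and the image interval is preserved. The factor $H$ governs the singular \emph{irreducible} real fibers (those over real roots of $H$, where $\sigma(F_i)=F_{3-i}$), and matching these singular fibers under $\chi$ gives $H=\mu H'$ with $\mu>0$ once the sign of the leading coefficient is normalized.

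For the \emph{if} direction, assume the three conditions hold. The strategy is to construct an explicit $G$-equivariant birational map $X\dasharrow X'$ fiber by fiber over $\PP^1$. After applying the element of $\mathrm{PGL}_2(\RR)$ matching the two real intervals (condition (2)) and rescaling coordinates to absorb the positive scalars $\lambda,\mu$ (condition (3)), I would reduce to the case where the defining data $B^2-4AC$ and $H$ literally coincide, so the discriminants $\Delta$ agree (this is condition (1), which as noted is implied). The remaining freedom is the choice of $A,B,C$ with prescribed $B^2-4AC$: different such choices correspond to different conic bundle models, and the task is to show they are all $G$-equivariantly birational. This is an elementary-transformation argument: one performs a sequence of blow-ups and blow-downs in smooth fibers that are $\tau$-equivariant (using the explicit form of $\tau$ as $z\mapsto -z$), changing $A,B,C$ while fixing the discriminant and the real locus, thereby connecting any two models with the same invariants.

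The main obstacle I expect is the \emph{if} direction, specifically controlling the elementary transformations so that they remain both $G$-equivariant and defined over $\RR$ while never altering the real interval $\eta(X(\RR))$ or creating/destroying the singular irreducible real fibers over roots of $H$. Over $\CC$ one has complete freedom to do elementary transformations in any fiber, but over $\RR$ a transformation centered at a non-real point must be paired with its complex conjugate, and a transformation centered in a fiber with real points can change which component of $\PP^1(\RR)$ lies in the image. The delicate bookkeeping is to verify that the transformations needed to pass between two models with identical invariants can always be chosen away from the boundary of the real interval and compatibly with the real structure $\sigma$, so that the constraints $\sigma(F_i)=F_{3-i}$ over roots of $H$ are preserved throughout. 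I would handle this by working fiber-locally and invoking the explicit normal forms from Theorem~\ref{thm.model.existence}, reducing the global problem to a finite check over the (real and complex-conjugate pairs of) roots of $\Delta$.
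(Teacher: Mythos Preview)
The paper itself does not supply a proof of this theorem; it is a survey that merely states the result and defers to \cite[Theorem~7.6]{cmyz}. So there is no in-paper proof to compare against, and your proposal has to be judged on its own merits.

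There is a genuine gap in your ``only if'' direction. You deduce the sign condition $B^2-4AC=\lambda(B'^2-4A'C')$ from the isomorphism of fixed curves $C\simeq C'$. But an isomorphism of hyperelliptic curves only determines the branch polynomial up to the action of $\mathrm{PGL}_2$ on the base \emph{and} a scalar; it does not by itself force the literal equality of polynomials (up to a positive scalar) that the theorem asserts. To close this, you need to know that any $G$-equivariant birational map between two such minimal $G$-conic bundles respects the conic bundle structures, i.e.\ induces a well-defined map on the base $\PP^1$. You assert this in passing (``acts on the base by an element $\phi\in\mathrm{PGL}_2(\RR)$'') and cite Theorem~\ref{thm.model.existence}, but that theorem is an existence statement for a model, not a rigidity statement for maps between models. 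The needed input is the Iskovskikh/Sarkisov-type rigidity for minimal $G$-conic bundles with sufficiently many singular fibers: every $G$-birational map is a composition of fiberwise links and hence lies over an automorphism of the base. Without this, the passage from ``$C\simeq C'$'' to ``same discriminant locus and same $H$ up to positive scalar'' does not go through; the same issue undermines your argument for condition~(2) and for matching the roots of $H$.

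Your ``if'' direction via elementary transformations is plausible but, as you yourself note, the bookkeeping over $\RR$ is the whole difficulty and you do not resolve it. A cleaner route is to work at the generic fiber: conditions (2) and (3) say exactly that the two conics $A x^2+Bxy+Cy^2=Hz^2$ and $A'x^2+B'xy+C'y^2=H'z^2$ over $\RR(t)$ have the same local invariants at every place of $\RR(t)$ (the real intervals control the real places, the discriminant controls the rest), hence the same Brauer class, hence are isomorphic over $\RR(t)$. Any such isomorphism extends to a $G$-equivariant birational map of conic bundles, since $\tau$ is the fiberwise involution $z\mapsto -z$ on both sides. This avoids the fiber-by-fiber elementary transformation bookkeeping entirely.
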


We conclude this note by giving explicit proof of the main corollary in the case $C$ irrational (see \cite[Section 8.C]{cmyz} for details).

Let $C$ be an hyperelliptic curve given by
$$
w^2=-4f(t)
$$
where  $f\in\RR[t]$ has even degree $\geq 6$, only simple roots and at least $4$ real roots. For given real numbers $a,b$, let $S_{a,b}$ be the surface with equation
$$
x^2+f(t)y^2=-(t-a)(t-b)z^2\;.
$$

Let $\tau_{a,b}$ be the corresponding involution. Then for general $a,b,a',b'$ in a given interval (we need to preserve the connectedness of the real locus of the surface),  $\tau_{a,b}$ and $\tau_{a',b'}$ are not conjugate by Theorem~\ref{thm.model.unicity}.

\nocite{ma-book,ma-book-en,cmyz}
\bibliographystyle{amsalpha}
\def\cprime{$'$} \def\cprime{$'$} \def\cprime{$'$} \def\cprime{$'$}
  \def\cprime{$'$} \def\cprime{$'$} \def\cprime{$'$}
\providecommand{\bysame}{\leavevmode\hbox to3em{\hrulefill}\thinspace}
\providecommand{\MR}{\relax\ifhmode\unskip\space\fi MR }
% \MRhref is called by the amsart/book/proc definition of \MR.
\providecommand{\MRhref}[2]{%
  \href{http://www.ams.org/mathscinet-getitem?mr=#1}{#2}
}
\providecommand{\href}[2]{#2}

 \end{document}